\newtheorem{thm}{Theorem}
\newcommand*{\Prob}[1]{\mathbb{P}(#1)}
\newcommand*{\intd}[0]{\mathrm{d}}
\begin{document}

\title{New Look at Finite Single Server Queue with Poisson Input and Semi-Markov Service Times}

% -----------------------------------------------------------
\def\sharedaffiliation{%
    \end{tabular}
    \begin{tabular}{c}%
}

\numberofauthors{2}
 \author{
 \alignauthor
Krzysztof Rusek\\
       \email{krusek@agh.edu.pl}
% \alignauthor
%Lucjan Janowski\\
%       \email{janowski@kt.agh.edu.pl}
 \alignauthor
Zdzis\l{}aw Papir\\
       \email{papir@kt.agh.edu.pl}
       \sharedaffiliation
       \affaddr{Department of Telecommunications}\\
       \affaddr{AGH University of Science and Technology}\\
       \affaddr{Krakow, Poland}
 }

\maketitle
% -----------------------------------------------------------

\begin{abstract}
The mathematics of the finite single server queue with Poisson input and semi-Markov service times($M/SM/1/b$) is similar to that used for $BMAP/G/1/b$ systems.
This observation results in new analytical formulas for a queue size in the $M/SM/1/b$ system.
Both stationary and the transient solutions are considered.
\end{abstract}

\category{G.3}{PROBABILITY AND STATISTICS }{  Queueing theory}
%\category{C.2.6}{Computer communication networks}{Internetworking}[Routers]

\terms{Performance,Theory}

\keywords{Semi-markov service time, queue length distribution,transient analysis}

\section{Introduction}
It was observed, see e.g~\cite{korelacja}, that packet lengths in IP networks have nonzero autocorrelation.
This correlation may affect the accuracy of the switching device models when it is not taken into account.
If the buffer is measured in bytes, then it is drained at rate $C$ B/s and the correlation is not important because the service time is equal to $1/C$.
On the other hand if the buffer is measured in packets, the service time depends on packet lengths. %\cite{Rusek}
Therefore, the service times are correlated themselves.

The semi-Markov process, being a direct generalization of the Markov process~\cite{janssen2006applied}, is often used as a model of the correlated service process in a queue.
The queues with semi-Markov service times have bean studied for a long time~\cite{neutsSM}.
Most of the theoretical results were obtained for the infinite capacity system.
Recently the general finite system $BMAP/SM/1/b$ was explored using the imbedded Markov chain approach~\cite{springerlink:10.1023/A:1019831410410}.

In this paper we present a new approach to $M/SM/1/b$ system.
The mathematical framework from~\cite{ChydzinskiBMAP2006springer} allows us to to find closed formulas for the queue size in both the stationary and the transient states.
This is especially important since non-stationary characteristics are getting a growing attention(see~\cite{Chdzinski2007StochMod} for references).

The model presented in this paper is the first step in creating a general model of a buffer in a real device.
For this kind of application the more complex arrival process has to be introduced to the model.

This is a short paper presenting work in progress.
Section~\ref{sec:model} describes in details the modeled system.
The main result is presented in section~\ref{sec:result}.
The paper is concluded in section~\ref{sec:cons} where also the future work is discussed.

\section{Queue model}\label{sec:model}
In this paper we consider a single server FIFO queuing system with correlated service times, fed by the stationary Poisson process with rate $\lambda$.
The system capacity is finite and equal to~$b$ (including a customer currently being served).

Suppose that the server can be in one of $m$ different states $S=\{1,\ldots,m\}$ and in the $i$th state service times are i.i.d. random variables with CDF $F_i(t),  i=1,\ldots,m$.
After each service completion the state of the server is changed in such way that the state sequence forms a homogenous Markov chain.

Let $\{J_n\}, n\geq 0$ denotes the state after $n-1$ transitions and $G_n, n\geq 0 $ is the $(n-1)$th service time.
Supposing $G_0=0$, the described service process $(J_n,G_n)$ is a semi-Markov process~\cite{janssen2006applied} defined on a finite set of states $S=\{1,2\ldots,m\}$ by the matrix $P_{J_{n-1},j}(t)= \Prob{J_n=j,G_n \leq t|(J_{n-1},G_{n-1})}$.
Note that the semi-Markov process is constructed of service times only.
The idle periods are ignored, however we define $J(t) \in S$ being the server state at time $t$ including the idle periods.
We assume that the time origin corresponds to a departure epoch.

It is assumed that the state transition matrix of the underlaying Markov chain $\bm{T}=T_{i,j}=P_{i,j}(\infty)$ is known as a part of system parametrization.
The other parameters are the system capacity and the service time distribution in each state.
Besides the service time distributions ($F_i$), their Laplace transforms $f_i(s)=\int_0^\infty e^{-st}\mathrm{d}F_i(t), Re(s) >0
$ will also be used; in most cases in a convenient vector notation
$$
\bm f(s)=\left(\frac{1-f_1(s)}{s},\ldots,\frac{1-f_m(s)}{s}\right)^T.
$$

\section{Main Result}\label{sec:result}
Investigating the mathematics of some queue characteristics introduced by Chydzinski in \cite{Chdzinski2007StochMod, ChydzinskiBMAP2006springer} we observed some similarities
between $BMAP/G/1/b$ and $M/SM/1/b$ systems.
Since the potential method presented in  \cite{Chdzinski2007StochMod, ChydzinskiBMAP2006springer}is a powerful tool for finding some queue characteristics, e.g. queue occupancy distribution, we investigated the possibility of using it for the system with semi-Markov service times.
In this section we prove that indeed it is possible to use the potential method for a large subclass (those with nonsingular transition matrix) of such systems.

Let us start with the queue length at time $t$ denoted as $X(t)$.
Its transient probability distribution depends on both, the queue length and the server state at time $t=0$.
Therefore we define the conditional distribution
\begin{align*}
\Phi_{n,i}(t,l)=\Prob{X(t)=l| X(0)=n,J(0)=i}\\
 0\leq n \leq b,\quad 1\leq i \leq m.
\end{align*}
Let us also define the Laplace transform of $\Phi_{n,i}(t,l)$
\begin{equation*}
    \phi_{n,i}(s,l)=\int_{0}^{\infty} e^{-st}\Phi_{n,i}(t,l)\mathrm{d}t
\end{equation*}
and its convenient vector form
\begin{equation*}
    \bm{\phi}_{n}(s,l)=(\phi_{n,1}(s,l),\phi_{n,2}(s,l),\ldots,\phi_{n,m}(s,l))^T.
\end{equation*}

The obtained results are easier to present if we establish the following notation
\begin{align*}
    a_{i,k}(s)&=\int_{0}^{\infty}\frac{e^{-(\lambda + s)t}(\lambda t)^{k}}{k!} \, \mathrm{d}F_i(t),\\
    d_{i,k}(s)&=\int_{0}^{\infty}\frac{e^{-(\lambda + s)t}(\lambda t)^{k}}{k!}(1-F_i(t)) \, \mathrm{d}t,\\
    \bm d_k(s)&=(d_{1,k}(s),\ldots,d_{m,k}(s))^T
\end{align*}
\begin{align*}
    &\bm{A}_k(s) = [T_{i,j}a_{i,k}(s)]_{i,j}\\
    &\bm{\bar{A}}_k(s)=\sum _{i=k}^{\infty} \bm{A}_k(s);\\
	&\bm R_{0}=\bm{0} \\
    &\bm R_{1}=\bm A_{0}^{-1} \\
    &\bm R_{k+1}=\bm R_{1}(\bm R_{k}-\sum_{i=0}^{k}\bm A_{i+1}\bm R_{k-i}), \qquad k \geq1,\\
    &\bm B_n(s)=\bm A_{n+1}(s)-\bm{\bar{A}}_{n+1}(s)\bm{\bar{A}}_0^{-1}(s)
\end{align*}
\begin{align*}
    &\bm{M}_{b}=\bm R_{b+1}(s) \bm A_0(s) + \sum _{k=0}^{b}\bm R_{b-k}(s) \bm B_{k}(s)- \\
    &-\frac{\lambda }{s+\lambda }\left(\bm R_{b}(s) \bm A_0(s)+\sum _{k=0}^{b-1} \bm R_{b-1-k}(s) \bm B_k(s)\right)
\end{align*}

Note that in the presented notation some matrices, namely $\bm{A}_0(s)$ and $\bm{\bar{A}}_0(s)$,  are inverted.
It is easy to show that $\det{\bm{A}_0(s)}=\det{\bm T}\cdot\prod_{j=1}^m a_{j,0}(s)$ a well as  $\bm{\bar{A}}_0(s)=\det{\bm T}\cdot\prod_{j=1}^m f_j(s)$.
%\begin{align*}
%    &\det{\bm{A}_0(s)}=\det{\bm T}\cdot\prod_{j=1}^m a_{j,0}(s),\\
%    &\bm{\bar{A}}_0(s)=\det{\bm T}\cdot\prod_{j=1}^m f_j(s).
%\end{align*}
Therefore the assumption that $a_{j,0}(s)$ and $f_j(s)$ are nonzero (we also assume that $\bm{M}_{b}$ in nonsingular) implicates the fact that the matrix $\bm T$ is the only source of singularities.
Under this assumption we can prove the following theorem.

\begin{thm}\label{th:1}
    If the transition matrix  $\bm{T}$ is not singular then the Laplace transform of the queue length distribution in the $M/SM/1/b$ system has the form:
    \begin{align}\label{eq:th1}
        &\bm{\phi}_{n}(s,l)=\\\nonumber
        &=\left(\bm{R}_{b-n+1}(s)\bm{A}_0(s)+ \sum _{k=0}^{b-n} \bm{R}_{b-n-k}(s)\bm{B}_k(s)\right)\bm{M}_{b}^{-1}\bm{l}_b(s,l)\\ \nonumber
        &+\sum _{k=0}^{b-n} \bm{R}_{b-n-k}(s)\bm{g}_k(s,l),
    \end{align}

    where
    \begin{equation*}
        \bm g_k(s,l)=\bm{\bar{A}}_{k+1}(s)\bm{\bar{A}}_0^{-1}(s)\bm r_b(s,l)-\bm r_{b-k}(s,l)
    \end{equation*}
    \begin{equation*}
        \bm r_k(s,l)=
        \begin{cases}
            \bm 0, \quad l < k \\
            \bm d_{l-k}(s) \quad k \leq l < b \\
            \bm f(s)-\sum_{i=0}^{b-k-1}\bm d_{i}(s), \quad l=b
        \end{cases}
    \end{equation*}
\end{thm}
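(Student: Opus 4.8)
The plan is to derive the governing linear system by a renewal (first-step) argument in the Laplace domain, to solve the resulting finite recursion in $n$ with the potential sequence $\bm{R}_k(s)$, and to close the system with the idle-state relation, which is exactly where $\bm{M}_b(s)$ is inverted. First I would condition on the epoch $\tau$ of the first departure. If $X(0)=n\ge1$ and $J(0)=i$, the server runs one service of length $\tau\sim F_i$; on $[0,\tau)$ the queue only grows, by Poisson arrivals, and saturates at $b$, while at $\tau$ one customer leaves and the server state jumps to $j$ with probability $T_{i,j}$. Splitting $\{X(t)=l\}$ according to whether $t<\tau$ or $t\ge\tau$ and transforming in $t$ turns the event $\{t<\tau\}$ into the residual-service vector $\bm{r}_n(s,l)$ (entries $\bm{d}_{l-n}(s)$ for $n\le l<b$, and the geometric tail $\bm{f}(s)-\sum_{i=0}^{b-n-1}\bm{d}_i(s)$ for $l=b$) and the event $\{t\ge\tau\}$ into $\sum_{k\ge0}\bm{A}_k(s)\,\bm{\phi}_{\min(n-1+k,\,b-1)}(s,l)$; collecting saturated terms gives
\begin{align*}
\bm{\phi}_n(s,l)=\bm{r}_n(s,l)&+\sum_{k=0}^{b-n-1}\bm{A}_k(s)\,\bm{\phi}_{n-1+k}(s,l)\\
&+\bm{\bar{A}}_{b-n}(s)\,\bm{\phi}_{b-1}(s,l),\qquad 1\le n\le b.
\end{align*}
For the empty state $n=0$ one conditions on the first (exponential) arrival instead, obtaining $\bm{\phi}_0(s,l)=\tfrac{1}{s+\lambda}\bm{1}_{[l=0]}+\tfrac{\lambda}{s+\lambda}\bm{\phi}_1(s,l)$. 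These $b+1$ vector equations close the system, and the hypothesis $\det\bm{T}\ne0$ --- which by the remark before the theorem makes $\bm{A}_0(s)$ and $\bm{\bar{A}}_0(s)$ invertible --- is precisely what the next steps require.

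Next I would solve the $b$ equations for $1\le n\le b$ by marching downward in $n$, keeping $\bm{\phi}_b(s,l)$ as the single free unknown. The $n=b$ equation already gives $\bm{\phi}_{b-1}=\bm{\bar{A}}_0^{-1}(\bm{\phi}_b-\bm{r}_b)$; substituting it into the rest feeds the products $\bm{\bar{A}}_{b-n}\bm{\bar{A}}_0^{-1}$ into both the $\bm{\phi}_b$-coupling and the data, and after rewriting with $\bm{\bar{A}}_k=\bm{A}_k+\bm{\bar{A}}_{k+1}$ these collapse into the matrices $\bm{B}_k(s)$ and the vectors $\bm{g}_k(s,l)$ of the statement. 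Each downward step solves the $n$-th equation for its lowest-index unknown $\bm{\phi}_{n-1}$ at the cost of one factor $\bm{A}_0^{-1}=\bm{R}_1$, and the recurrence defining $\bm{R}_k(s)$ is exactly what makes the iterated inversion a convolution with $\bm{R}$ --- i.e. $\bm{R}_k$ is a discrete Green's function for the convolution operator $\sum_i\bm{A}_i(\cdot)$. Iterating down to $n=1$ therefore expresses every $\bm{\phi}_n$, $0\le n\le b-1$, in terms of $\bm{\phi}_b$ through precisely the matrix $\bm{R}_{b-n+1}(s)\bm{A}_0(s)+\sum_k\bm{R}_{b-n-k}(s)\bm{B}_k(s)$ multiplying $\bm{\phi}_b$ and the particular part $\sum_k\bm{R}_{b-n-k}(s)\bm{g}_k(s,l)$, as in \eqref{eq:th1}.

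Finally, the only equation not yet used is the idle relation, in which $\bm{\phi}_0$ and $\bm{\phi}_1$ are now known affine functions of $\bm{\phi}_b$. Moving the $\bm{\phi}_b$-free terms to one side identifies the coefficient of $\bm{\phi}_b$ with exactly $\bm{M}_b(s)$ (the combination from $\bm{\phi}_0$ minus $\tfrac{\lambda}{s+\lambda}$ times the one from $\bm{\phi}_1$) and the right-hand side with $\bm{l}_b(s,l)$ (the idle term $\tfrac{1}{s+\lambda}\bm{1}_{[l=0]}$ corrected by the particular-solution parts of $\bm{\phi}_0$ and $\bm{\phi}_1$), so that $\bm{\phi}_b=\bm{M}_b^{-1}(s)\bm{l}_b(s,l)$ under the standing assumption that $\bm{M}_b(s)$ is nonsingular; back-substitution then yields \eqref{eq:th1} for all $n$. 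I expect the matrix bookkeeping of these last two steps --- the downward solution via $\bm{R}_k$ and the closure through $\bm{M}_b$ --- to be the main obstacle: one must keep index ranges consistent with the normalization $\bm{R}_0=\bm{0}$, $\bm{R}_1=\bm{A}_0^{-1}$, match the shifted convolutions $\sum_k\bm{R}_{b-n-k}\bm{B}_k$ against the saturation tails $\bm{\bar{A}}_k$ and the $l=b$ case hidden in $\bm{r}_k$, and carry the idle factor $\tfrac{\lambda}{s+\lambda}$ correctly into $\bm{M}_b$ and $\bm{l}_b$; by comparison the probabilistic first step and the invertibility accounting ($\bm{A}_0^{-1},\bm{\bar{A}}_0^{-1}$ from $\det\bm{T}\ne0$) are routine.
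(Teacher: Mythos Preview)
Your proposal is correct and follows essentially the same approach as the paper: a first-step renewal argument yielding the recursion in $n$, the idle-state boundary equation, and then solution of the recursion via the potential sequence $\bm{R}_k(s)$, closing with the inversion of $\bm{M}_b(s)$. The only cosmetic difference is that the paper first reindexes via $\bm{\varphi}_n=\bm{\phi}_{b-n}$ and then quotes a ready-made lemma (the potential-method representation $\bm{\varphi}_n=\bm{R}_{n+1}\bm{C}+\sum_k\bm{R}_{n-k}\bm{\psi}_k$) instead of carrying out your ``marching downward'' iteration by hand; your Green's-function interpretation of $\bm{R}_k$ is exactly what that lemma encodes.
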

\begin{proof}
    If the system is not empty at the beginning then from the total probability theorem applied to the first service completion time we get
    \begin{align}\label{proof11}
        &\Phi_{n,i}(t) =\\\nonumber
        &\sum_{\substack{k=0\\j=1}}^{\substack{b-n-1\\m}}\int_0^t \Phi_{n+k-1,j}(t-u)T_{i,j}\frac{e^{-\lambda u}(\lambda u)^k}{k!}\intd F_i(u) \nonumber \\
        &+\sum_{\substack{k=b-n\\j=1}}^{\substack{\infty\\m}}\int_0^t \Phi_{b-1,j}(t-u)T_{i,j}\frac{e^{-\lambda u}(\lambda u)^k}{k!} \intd F_i(u) \nonumber \\
        &+ \rho_{n,t}(t), \quad, 0< m \leq b,\nonumber
    \end{align}
    where
    \begin{equation*}
        \rho_{n,i}(t)=(1-F_i(t))\begin{cases}
            0, \quad l < k \\
            \frac{e^{-\lambda t}(\lambda t)^{l-n}}{(l-n)!} \quad k \leq l < b \\
            \sum_{k=b-n}^{\infty}\frac{e^{-\lambda t}(\lambda t)^k}{k!}, \quad l=b.
        \end{cases}
    \end{equation*}
    The first term in \eqref{proof11} represents the case when the first service completion time $u$ was before time $t$ and there was no packet loss.
    In this case the number of packets arrivals in $(0,u]$ must be less than $b-n-1$, also the state of the server  might have changed.

    The second term in \eqref{proof11} represents the case when the first service completion time $u$ was before time $t$ and there was a buffer overflow.
    This means that in time $(0,u]$ we had $k\geq b-n$ arrival and $k-b+n$ of them ware dropped, also the state of the server might have changed.

    Finally the third term in \eqref{proof11} corresponds to the situation when the first service completion time is after time $t$.
    This happens with probability $1-F_i(t)$, which depends on the current state of the server.
    In this situation the queue has $l<b$ packets if $n-l$ packets arrived during time $t$ or $b$ packets if $n-b$ or more packets arrived during time $t$.

    Let us now suppose that at the time $t=0$ the queue is empty.
    Applying the total probability formula with respect to the first arrival time we obtain
    \begin{equation}\label{zero}
        \Phi_{0,i}(t)=\int_0^t\Phi_{1,i}(t-u)\lambda e^{-\lambda u}\intd u + \delta_{0,l}e^{-\lambda t}.
    \end{equation}
    The first term in \eqref{zero} corresponds to the situation when the first packet arrives before time $t$.
    The second term in \eqref{zero} corresponds to the situation when there is no arrival before time $t$.
    The probability of such an event is equal to $e^{-\lambda t}$ the queue length in this case stays equal to $0$.

    Applying Laplace transform and matrix notation to the equations \eqref{proof11} and \eqref{zero}, we get:
    \begin{align}\label{proof11}
        &\bm \phi_n(s)=\sum_{k=0}^{b-n-1} \bm \phi_{n+k-1}(s)\bm A_k(s) \\\nonumber
        &+ \sum_{k=b-n}^{\infty} \bm \phi_{b-1}(s)\bm A_k(s) + \bm r_k(s) \\\nonumber
        &\bm \phi_0(s) = \frac{\lambda}{s+\lambda}\bm \phi_1(s) + \frac{\delta_{0,l}}{s+\lambda}
    \end{align}
    Changing indices by substitution $\bm \varphi_n(s)=\bm\phi_{b-n}(s)$ yields:
    \begin{align}
        &\sum_{j=-1}^{n}\bm A_{j+1}(s)\bm\varphi_{n-j}(s)-\bm\varphi_n(s)=\bm\psi_n(s)\label{eq:forpot}\\
        &\bm \varphi_b(s) = \frac{\lambda}{s+\lambda}\bm \varphi_{b-1}(s) + \frac{\delta_{0,l}}{s+\lambda}\label{eq:forpot0},
    \end{align}
    where
    \begin{align}\label{eq:psi}
        &\bm\psi_n(s)=\bm A_{n+1}(s)\bm \varphi_0(s) -\left ( \sum_{k=n+1}^\infty \bm A_k(s)\bm \varphi_1(s)\right ) \\ \nonumber
        &+ \bm r_{b-n}(s).
    \end{align}
    Now, according to lema 3.2.1 from \cite{Chdzinski2007StochMod}, every solution of the system \eqref{eq:forpot} is in the following form:
    \begin{align}\label{eq:solution}
        \bm\varphi_n(s) = \bm R_{n+1}(s)\bm C(s)+\sum_{k=0}^n \bm R_{n-k}(s)\bm\psi_k(s).
    \end{align}
    For $n=0$ in \eqref{eq:solution}, we get
    \begin{equation}\label{eq:c}
        \bm C(s) = \bm A_0(s)\bm \varphi_0(s)
    \end{equation}
    while $n=0$ in \eqref{eq:forpot} yields
    \begin{equation}\label{eq:fi1}
        \bm\varphi_1(s) = \bm{\bar{A}}_0^{-1}(s)(\bm\varphi_0(s) -\bm r_b(s) ).
    \end{equation}
    Note that for the $M/SM/1/b$ system $\bm{\bar{A}}_0(s)=[T_{i,j}f_i(s)]_{i,j}$ which is much simpler in computation then the version for a BMAP queue~\cite{ChydzinskiBMAP2006springer}.

    Using \eqref{eq:c} in \eqref{eq:solution} and \eqref{eq:fi1} in \eqref{eq:psi}, we can represent $\bm\varphi_n(s)$ as a function of $\bm\varphi_0(s)$.
    Now substituting $n=b$ and $n=b-1$ in \eqref{eq:solution} and using \eqref{eq:forpot0} we obtain $\bm\varphi_0(s)$ which finishes the proof of Theorem \ref{th:1}. %\qedhere
\end{proof}

The Laplace transform of a queue size distribution have the same form as the one for a BMAP queue~\cite{ChydzinskiBMAP2006springer}.
The only difference is in the definition of matrices $\bm A_k(s)$ and vectors $d_k$.
Therefore the numerical complexities of both solutions are the same and equal to $O(m^3b^2)$.
The brute-force solution of \eqref{proof11} has the complexity $O(m^3b^3)$.

The stationary queue size distribution formula is given by:
$$
\Phi(l)=\lim_{t\to\infty}\Phi_{n,i}(t,l)=\lim_{s\to0+}s\phi_{n,i}(s,l),
$$
and does not depend on the initial conditions.
In order to compute the transient state characteristics, one can apply a numerical algorithm e.g~\cite{grassmann2000computational} for the Laplace transform inversion.

Because at the input we have the Poisson process, some formulas get simplified compared to these from~\cite{ChydzinskiBMAP2006springer}.
Let $\bm{\bar{A}}_0(s)$ be an example. Also the functionals $a_{i,k}(s)$ and $d_{i,k}(s)$ are relatively easy to find for some simple service time distributions.
%For example if service time is deterministic and equal to $d$ then  they take form:
%\begin{align}
%	a_{i,k}(s) &= \frac{e^{-(\lambda + s)d_i}(\lambda d_i)^{k}}{k!},  \nonumber \\
%	d_{i,k}(s) &= \frac{\lambda^{k}}{(s+\lambda)^{k+1}k!}(\Gamma(k+1)-\Gamma(k+1,sd_i+\lambda d_i))\nonumber,
%\end{align}
%where $\Gamma(\cdot)$ denotes gamma function and $\Gamma(\cdot,\cdot)$ is the incomplete gamma function.

\section{Conclusions and Future Work}\label{sec:cons}
In this paper we presented a novel approach to the finite single server queue with Poisson input and semi-Markov service times.
Using the potential method, we proved the new formula for the laplace transform of queue length distribution in both transient and stationary phase.

Potential method has bean proven to be a powerful tool for computing all kinds of queue characteristic such as waiting time or time to buffer overflow.
Therefore, it should be straightforward to extend our results beyond just a queue length distribution.

Since the Poisson process is not the best traffic model for packet networks.
As a future work we are going to  develop similar formalism for better traffic models such as Markovian Arrival Process.

\section{Acknowledgment}
The work presented in this paper was supported by the national project 647/N-COST/2010/0

% -----------------------------------------------------------
%\bibliographystyle{abbrv}
%\bibliography{bibliography}

\end{document}
% -----------------------------------------------------------

%\section{Applications and Future Work}
%%We are aiming at modeling buffer in a packet network device.
%%Correlation in packet lengths require  model with correlated service times.
%Table 2 presents some numerical results for
%%Adding correlation to service times drastically change queue characteristics.
%%In this section we present a numerical example showing the importance of modeling the correlation between service times.
%%We chose the steady state blocking ($d$) probability as a performance measure.
%We will show this on a simple numerical example.
%Let the service time in the $i$th state to be deterministic and equal to $\tau_i,i=1,2$.
%The following transition matrix
%\begin{equation*}
%T=\begin{bmatrix}
% \frac{19}{20} & \frac{1}{20} \\\\
% \frac{1}{15} & \frac{14}{15}
%\end{bmatrix}
%\quad
%  \begin{array}{c}
%    \tau_1=1 \mu s \\\\
%    \tau_2=2 \mu s \\
%  \end{array}
%\end{equation*}
%introduces the service time correlation vanishing after about ten lags.
%\begin{table}[h]
%\begin{tabular}{|c|c|c|c|}
%  \hline
%  % after \\: \hline or \cline{col1-col2} \cline{col3-col4} ...
%  1 & 1 & 1 & 1 \\
%  1 & 1 & 1 & 1 \\
%   1& 1 & 1 & 1 \\
%  1 & 1 & 1 & 1 \\
%  1 & 1 & 1 & 1 \\
%  1 & 1 & 1 & 1 \\
%  \hline
%\end{tabular}
%\caption{test}
%\end{table}